\newtheorem{thm}{Theorem}
\newtheorem{lem}[thm]{Lemma}
\newtheorem{cor}[thm]{Corollary}
\begin{document}

\title[Characterization of derivations]{Characterization of derivations through their actions on certain elementary functions
}


\author[E.~Gselmann]{Eszter Gselmann}

\address{
Department of Analysis, 
Institute of Mathematics, 
University of Debrecen,
P. O. Box: 12., 
Debrecen, 
H--4010, 
Hungary}

\email{gselmann@science.unideb.hu} 
\date{\today }

\thanks{This research has been supported by the Hungarian Scientific Research Fund
(OTKA) Grant NK 81402 
and by the T\'{A}MOP 4.2.4.A/2-11-1-2012-0001 (Nemzeti Kiv\'{a}l\'{o}s\'{a}g Program --
Hazai hallgat\'{o}i, illetve kutat\'{o}i szem\'{e}lyi t\'{a}mogat\'{a}st biztos\'{i}t\'{o} rendszer kidolgoz\'{a}sa \'{e}s m\H{u}k\"{o}dtet\'{e}se
konvergencia program) project implemented
through the EU and Hungary co-financed by the European Social Fund.}

\maketitle

\begin{abstract}
The main aim of this note is to provide characterization theorems concerning real derivations. 
Among others the following implication will be verified: Assume that $\xi\colon \mathbb{R}\to \mathbb{R}$ 
is a given differentiable function and for the additive function $d\colon \mathbb{R}\to \mathbb{R}$, the mapping 
\[
 x\longmapsto d\left(\xi(x)\right)-\xi'(x)d(x)
\]
is regular (e. g. measurable, continuous, locally bounded). Then $d$ is a sum of a derivation and a linear function. 
\keywords{derivation, linear function, elementary function }
 \subjclass{39B82,  39B72}
\end{abstract}


\section{Introduction}

Throughout this paper $\mathbb{N}$ denotes the set of the positive integers, further $\mathbb{Z}, \mathbb{Q}$, and
$\mathbb{R}$ stand for the set of the integer, the set of the rational and the set of the real numbers, respectively. 

The aim of this work is to prove characterization theorems on derivations as well as on linear functions. 
Therefore, firstly we have to recall some definitions and auxiliary results. 

A function $f:\mathbb{R}\rightarrow\mathbb{R}$ is called an \emph{additive} function
if,
\[
f(x+y)=f(x)+f(y)
\]
holds for all $x, y\in\mathbb{R}$. 

We say that an additive
function $f:\mathbb{R}\rightarrow\mathbb{R}$ is a \emph{derivation} if
\[
f(xy)=xf(y)+yf(x)
\]
is fulfilled for all $x, y\in\mathbb{R}$.

Clearly, the identically zero function is a real derivation. 
It is rather difficult to give another example, since the following statements are valid concerning real derivations. 
If $f\colon \mathbb{R}\to \mathbb{R}$ is a real derivation, then $f(x)=0$ holds for all 
$x\in \mathrm{algcl}(\mathbb{Q})$ (the algebraic closure of the rationals). Further, 
if $f\colon \mathbb{R}\to \mathbb{R}$ is a real derivation and $f$ is measurable or bounded (above or below) on a set of positive Lebesgue 
measure, then $f$ is identically zero. 
Despite of this very pathological behavior, there exist  non identically zero derivations in $\mathbb{R}$, 
see Kuczma \cite[Theorem 14.2.2.]{Kuc09}. 

The additive function $f\colon\mathbb{R}\to\mathbb{R}$ is termed to be a \emph{linear function} if 
$f$ is of the form 
\[
 f(x)=f(1)\cdot x 
\qquad 
\left(x\in\mathbb{R}\right). 
\]

It is easy to see from the above definition that every derivation
$f:\mathbb{R}\rightarrow\mathbb{R}$ satisfies equation
\[
\tag{$\ast$}\label{ast} f(x^{k})=kx^{k-1}f(x)
\quad
\left(x\in\mathbb{R}\setminus\left\{0\right\}\right)
\]
for arbitrarily fixed $k\in\mathbb{Z}\setminus\left\{0\right\}$.
Furthermore, the converse is also true, in the following sense:
if $k\in\mathbb{Z}\setminus\left\{0, 1\right\}$ is fixed and
an additive function $f:\mathbb{R}\rightarrow\mathbb{R}$ satisfies (\ref{ast}), 
then $f$ is a derivation, see e.g.,
Jurkat \cite{Jur65}, Kurepa \cite{Kur64}, and
Kannappan--Kurepa \cite{KanKur70}.

Concerning linear functions, Jurkat \cite{Jur65} and,
independently, Kurepa \cite{Kur64} proved that every additive function
$f:\mathbb{R}\rightarrow\mathbb{R}$ satisfying
\[
f\left(\frac{1}{x}\right)=\frac{1}{x^{2}}f(x)
\quad
\left(x\in\mathbb{R}\setminus\left\{0\right\}\right)
\]
has to be linear.

In \cite{NisHor68} A.~Nishiyama and S.~Horinouchi investigated
additive functions $f:\mathbb{R}\rightarrow\mathbb{R}$
satisfying the additional equation
\[
f(x^{n})=cx^{k}f(x^{m})
\quad
\left(x\in\mathbb{R}\setminus\left\{0\right\}\right),
\]
where $c\in\mathbb{R}$ and $n, m, k\in\mathbb{Z}$ are arbitrarily
fixed.

Henceforth we will say that the
function in question is \emph{regular} on its domain, if at least 
one of the following statements are fulfilled. 
\begin{enumerate}[(i)]
\item locally bounded;
\item continuous;
\item measurable in the sense of Lebesgue. 
\end{enumerate}

Concerning rational functions F.~Halter-Koch and L.~Reich proved similar result for derivations as well as linear functions, 
see \cite{HalRei01},\cite{HalRei00}. These results were strengthened in \cite{Gse13} in the following way. 

\begin{thm}\label{thm1}
 Let $n\in\mathbb{Z}\setminus\left\{0\right\}$ and 
$\left(\begin{array}{cc}
a&b\\
c&d
\end{array}
\right)\in\mathbf{GL}_{2}(\mathbb{Q})$ be such that 
\begin{enumerate}[--]
 \item if $c=0$, then $n\neq 1$;
\item if $d=0$, then $n\neq -1$. 
\end{enumerate}
Let further $f, g\colon\mathbb{R}\to\mathbb{R}$ be additive functions and define the function 
$\phi$ by 
\[
 \phi(x)=f\left(\frac{ax^{n}+b}{cx^{n}+d}\right)-\frac{x^{n-1}g(x)}{\left(cx^{n}+d\right)^{2}} 
\qquad 
\left(x\in\mathbb{R}, \,
cx^{n}+d\neq 0\right). 
\]
Let us assume $\phi$ to be regular. 
Then, the functions $F, G\colon\mathbb{R}\to\mathbb{R}$ defined by 
\[
 F(x)=f(x)-f(1)x \quad \text{and} 
\quad 
G(x)=g(x)-g(1)x
\quad
\left(x\in\mathbb{R}\right)
\]
are derivations. 
\end{thm}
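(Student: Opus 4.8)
The plan is to reduce the statement to the case of vanishing linear parts and then to extract from the assumed regularity the power identity $(\ast)$ for $F$, whence the converse characterization of derivations finishes the job.

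First I would separate the linear parts. Writing $f(x)=F(x)+f(1)x$ and $g(x)=G(x)+g(1)x$ with $F(1)=G(1)=0$, the contribution of $f(1)$ and $g(1)$ to $\phi$ is the genuine rational function $f(1)\frac{ax^{n}+b}{cx^{n}+d}-g(1)\frac{x^{n}}{(cx^{n}+d)^{2}}$, which is regular. Hence $\phi$ is regular if and only if
\[
\Psi(x)=F\!\left(\frac{ax^{n}+b}{cx^{n}+d}\right)-\frac{x^{n-1}G(x)}{(cx^{n}+d)^{2}}
\]
is regular, and it suffices to prove that $F$ and $G$ are derivations. It is convenient to record that $\xi(x)=\frac{ax^{n}+b}{cx^{n}+d}$ has derivative $\xi'(x)=\frac{n(ad-bc)x^{n-1}}{(cx^{n}+d)^{2}}$, so that $\Psi$ differs from the derivation-defect $F(\xi(x))-\xi'(x)F(x)$ only by the term $\frac{x^{n-1}}{(cx^{n}+d)^{2}}\bigl(n(ad-bc)F(x)-G(x)\bigr)$.

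Next, using that $F$ is $\mathbb{Q}$-linear and $F(1)=0$, I would simplify the argument of $F$ in the cases where it reduces to a power. When $c=0$ (so $d\neq0$ and, by hypothesis, $n\neq1$) one has $\xi(x)=\frac{a}{d}x^{n}+\frac{b}{d}$, hence $F(\xi(x))=\frac{a}{d}F(x^{n})$; clearing the constant denominator, the regular function becomes $ad\,F(x^{n})-x^{n-1}G(x)$. The key point is that this is a generalized polynomial in $x$ (a composition of the additive $F$ with the power $x^{n}$, plus a product of an ordinary polynomial with the additive $G$), and a regular generalized polynomial is an ordinary polynomial $P$. Replacing $x$ by $qx$ with $q\in\mathbb{Q}$ and comparing with the $\mathbb{Q}$-homogeneity of the two additive terms forces $P(x)=\lambda x^{n}$, and evaluating at $x=1$ with $F(1)=G(1)=0$ gives $\lambda=0$. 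This yields the coupling identity $ad\,F(x^{n})=x^{n-1}G(x)$. Feeding the additivity of $G$ back into it (equivalently, the additivity of $x\mapsto x^{1-n}F(x^{n})$), a short computation—immediate for $n=2$, and in general carried out through the induced monomial functions—recovers exactly $(\ast)$, namely $F(x^{n})=nx^{n-1}F(x)$. Since $n\in\mathbb{Z}\setminus\{0,1\}$, the converse quoted after $(\ast)$ shows that $F$ is a derivation, and the coupling identity then gives $G=n\,ad\,F$, also a derivation. The case $d=0$ (where $n\neq-1$, and necessarily $b,c\neq0$) is entirely analogous: modulo the constant that $F$ annihilates, $\xi(x)=\frac{a}{c}+\frac{b}{c}x^{-n}$ reduces to the power $x^{-n}$, and the argument applies with $n$ replaced by $-n\in\mathbb{Z}\setminus\{0,1\}$.

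The remaining and genuinely harder case is $c\neq0\neq d$, where the argument of $F$ is a true rational function: from the partial-fraction form $\xi(x)=\frac{a}{c}-\frac{ad-bc}{c}\cdot\frac{1}{cx^{n}+d}$ and $F(1)=0$ we obtain $F(\xi(x))=-\frac{ad-bc}{c}F\!\left(\frac{1}{cx^{n}+d}\right)$, and the reciprocal $\frac{1}{cx^{n}+d}$ is no longer a power of $x$, so the step ``regular generalized polynomial is an ordinary polynomial'' does not apply directly. This is where I expect the main obstacle to lie. The plan is to control the reciprocal through the symmetric bi-additive defect $B_{F}(u,v)=F(uv)-uF(v)-vF(u)$: writing $w=cx^{n}+d$ and using $F(w)=cF(x^{n})$ one gets $F(1/w)=-\frac{cF(x^{n})}{w^{2}}-\frac{1}{w}B_{F}(w,1/w)$, which reinstates $F(x^{n})$ as in the previous case at the cost of the term $B_{F}(w,1/w)$. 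One then argues, using the regularity of $\Psi$ together with the Jurkat--Kurepa reciprocal theorem (an additive function with $f(1/x)=x^{-2}f(x)$ is linear), that this defect term is forced to vanish, returning the analysis to the coupling identity and finishing the proof. Throughout, the hypotheses excluding $n=1$ when $c=0$ and $n=-1$ when $d=0$ are precisely what prevents the argument of $f$ from degenerating into an affine function of $x$; in those excluded situations only a single $\mathbb{Q}$-linear relation between $f$ and $g$ would survive, which is too weak to force both to be derivations.
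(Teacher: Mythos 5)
A preliminary remark: this paper contains no proof of Theorem~\ref{thm1} at all --- it is quoted as a known result from \cite{Gse13} --- so your attempt can only be measured against that cited source, and on its own merits as a proof.

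Your reduction to $F(1)=G(1)=0$ is correct, and in the genuinely polynomial cases your argument is complete and sound: for $c=0$ with $n\geq 2$ (and symmetrically $d=0$ with $n\leq -2$, where the two terms become $(b/c)F(x^{m})$ and $x^{m-1}G(x)/c^{2}$ with $m=-n\geq 2$) the function $ad\,F(x^{n})-x^{n-1}G(x)$ is a generalized polynomial, regularity upgrades it to an ordinary polynomial, your $\mathbb{Q}$-homogeneity argument plus evaluation at $x=1$ kills it, and comparing coefficients of $q^{n-1}$ after the substitution $x\mapsto x+q$ correctly yields $G=n\,ad\,F$ and the identity $(\ast)$, whence both functions are derivations. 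But two genuine gaps remain. First, you misallocate where the power-case argument applies: when $c=0$ the hypotheses allow \emph{any} $n\in\mathbb{Z}\setminus\{0,1\}$, in particular negative $n$, and then $F(x^{n})$ is a composition with a negative power, not a generalized polynomial, so the step ``regular generalized polynomial is an ordinary polynomial'' is unavailable; likewise $d=0$ with positive $n$ gives $\xi(x)=a/c+(b/c)x^{-n}$, again a negative power. Your claim that the $d=0$ case is ``entirely analogous, with $n$ replaced by $-n\in\mathbb{Z}\setminus\{0,1\}$'' silently assumes the sign of the exponent cooperates, which happens for only half of the admissible exponents; the reciprocal difficulty you defer to the case $c\neq 0\neq d$ is already present when $cd=0$.

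Second, and more seriously, the central case $c\neq 0\neq d$ is not proved but only programmed. The identity $F(1/w)=-cF(x^{n})/w^{2}-(1/w)B_{F}(w,1/w)$ with $w=cx^{n}+d$ is correct, but the assertion that regularity of $\Psi$ ``forces the defect term to vanish'' is precisely the content of the theorem in this case, and the Jurkat--Kurepa result you invoke has as its hypothesis the \emph{exact} identity $f(1/x)=x^{-2}f(x)$ for all $x\neq 0$ --- you have this identity only up to a regular, not identically zero, perturbation, and with the defect $B_{F}(w,1/w)$ depending on the two entangled quantities $w$ and $x$ at that. Converting the one-variable regularity of $\Psi$ into control of the two-variable bi-additive defect $B_{F}$ is exactly the technical work carried out in \cite{Gse13}, and nothing in your sketch substitutes for it. As written, your argument establishes the theorem only in the pure-power cases with favorable sign of the exponent.
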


Roughly speaking the above cited papers dealt with a special case of the following problem. 
Assume that $\xi\colon \mathbb{R}\to \mathbb{R}$ 
is a given differentiable function and for the additive function $d\colon \mathbb{R}\to \mathbb{R}$, the mapping 
\[
 x\longmapsto d\left(\xi(x)\right)-\xi'(x)d(x)
\]
is regular on its domain. It is true that in case 
$d$ admits a representation
\[
 d(x)=\chi(x)+d(1)\cdot x 
\quad 
\left(x\in \mathbb{R}\right), 
\]
where $\chi\colon \mathbb{R}\to \mathbb{R}$ is a real derivation?

In view of the above results, in case $n\in\mathbb{Z}\setminus\left\{0\right\}$ and 
$\left(\begin{array}{cc}
a&b\\
c&d
\end{array}
\right)\in\mathbf{GL}_{2}(\mathbb{Q})$
and the function $\xi$ is 
\[
 \xi(x)=\dfrac{ax^{n}+b}{cx^{n}+d}
\quad 
\left(x\in \mathbb{R}, cx^{n}+d\neq 0\right), 
\]
then the answer is \emph{affirmative}. 
The main aim of this note is to extend this result to other classes of elementary functions such as 
the exponential function, the logarithm function, the trigonometric functions and the hyperbolic functions. 
Concerning such type of investigations, we have to remark the paper of Gy.~Maksa (see \cite{Mak13}), where the previous 
problem was investigated under the supposition that the mapping 
\[
 x\longmapsto d\left(\xi(x)\right)-\xi'(x)d(x)
\]
is identically zero. 

\section{The main result}

Our main result is contained in the following. 

\begin{thm}\label{thm2}
 Assume that for the additive function 
$d\colon \mathbb{R}\to \mathbb{R}$ the mapping $\varphi$ defined by 
\[
 \varphi(x)=d\left(\xi(x)\right)-\xi'(x)d(x)
\]
is regular. Then the function $d$ can be represented as 
\[
 d(x)=\chi(x)+d(1)\cdot x 
\quad 
\left(x\in \mathbb{R}\right), 
\]
where $\chi\colon \mathbb{R}\to \mathbb{R}$ is a derivation, 
in any of the following cases
\begin{multicols}{2}
\begin{enumerate}[(a)]
 \item \[\xi(x)=a^{x}\]
\item \[\xi(x)=\cos(x)\]
\item \[\xi(x)=\sin(x)\]
\item \[\xi(x)=\cosh(x)\]
\item \[\xi(x)=\sinh(x). \]
\end{enumerate}
\end{multicols}
\end{thm}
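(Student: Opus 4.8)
The plan is to collapse all five cases to a single statement about the ``power defect'' of $d$ and then to feed it into Theorem~\ref{thm1}. Concretely, for a suitable fixed exponent $k\in\{2,3\}$ (chosen according to the parity of $\xi$) I would produce an identity of the shape
\[
 d\bigl(\xi(x)^{k}\bigr)-k\,\xi(x)^{k-1}d\bigl(\xi(x)\bigr)=R(x),
\]
where $R$ is a combination of $\varphi(x)$ and $\varphi(kx)$ with continuous coefficients (plus possibly an additive constant). The left-hand side is precisely the value at the point $u=\xi(x)$ of the additive power defect $u\mapsto d(u^{k})-k\,u^{k-1}d(u)$; hence, once the identity is established, that defect inherits the regularity of $\varphi$ on the entire range of $\xi$. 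The whole game is to arrange that every contribution proportional to the ``wild'' term $\xi'(x)d(x)$ cancels on the right, so that only regular ingredients remain.

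First I would fix the exponent case by case, according to which multiplication formula writes $\xi(kx)$ as a polynomial in $\xi(x)$ \emph{alone}. For $\xi(x)=a^{x}$ one has $a^{2x}=(a^{x})^{2}$, and for the even functions the duplication formulas $\cosh 2x=2\cosh^{2}x-1$ and $\cos 2x=2\cos^{2}x-1$ are quadratic in $\xi(x)$; in these three cases I take $k=2$. For the odd functions $\sinh$ and $\sin$ the duplication formula introduces the foreign factor $\cosh x$, respectively $\cos x$ (as in $\sinh 2x=2\sinh x\cosh x$), so the square cannot be isolated; instead I use the triplication formulas $\sinh 3x=3\sinh x+4\sinh^{3}x$ and $\sin 3x=3\sin x-4\sin^{3}x$, which are cubic in $\xi(x)$ alone, and take $k=3$.

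The heart of the argument, and the step I expect to be the most delicate, is this cancellation. Substituting $d(\xi(x))=\varphi(x)+\xi'(x)d(x)$ and $d(\xi(kx))=\varphi(kx)+k\,\xi'(kx)\,d(x)$ (using $d(kx)=k\,d(x)$), together with the value of $d(\xi(x)^{k})$ obtained from the multiplication formula by additivity of $d$, into $d(\xi(x)^{k})-k\,\xi(x)^{k-1}d(\xi(x))$, all terms carrying $d(x)$ collect into a single factor whose coefficient is a trigonometric or hyperbolic polynomial that must vanish identically. For example, in the $\sinh$ case this coefficient is proportional to $\cosh 3x-\cosh x-4\sinh^{2}x\cosh x$, which is $0$ since $\cosh 3x=4\cosh^{3}x-3\cosh x$ and $4\sinh^{2}x\cosh x=4\cosh^{3}x-4\cosh x$; the $\cos$ and $\sin$ cases run through the parallel identities, and the $a^{x}$ case is immediate because the two factors $\ln a$ cancel. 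The subtlety is exactly that $k$ must be selected so that the polynomial identity among $\xi(kx)$, the powers of $\xi(x)$ and $\xi'(x)$ closes up; any other choice leaves an uncancelled $d(x)$.

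Finally I would finish by invoking Theorem~\ref{thm1}. The ranges of $\xi$ in the five cases are $(0,\infty)$, $[-1,1]$, $[-1,1]$, $[1,\infty)$ and $\mathbb{R}$, each of positive Lebesgue measure, so $u\mapsto d(u^{k})-k\,u^{k-1}d(u)$ is regular there. Applying Theorem~\ref{thm1} with $f=d$, $g=k\,d$, exponent $n=k\in\{2,3\}$ and the identity matrix in $\mathbf{GL}_{2}(\mathbb{Q})$ (the side conditions hold because $k\neq 1$), one has $\phi(x)=f(x^{k})-x^{k-1}g(x)=d(x^{k})-k\,x^{k-1}d(x)$, and only regularity on a set of positive measure is actually used in its proof; the theorem then gives that $\chi:=d-d(1)\cdot\mathrm{id}$ is a derivation, which is the claimed representation $d(x)=\chi(x)+d(1)\cdot x$. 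In the cases $k=2$ (that is, (a), (b), (d)) one may instead finish self-containedly, since there the defect is literally $\beta(u):=d(u^{2})-2u\,d(u)=B(u,u)$ with $B(u,v):=d(uv)-u\,d(v)-v\,d(u)$; polarizing via $B(u,v)=\tfrac12\bigl(\beta(u+v)-\beta(u)-\beta(v)\bigr)$ shows the symmetric bi-additive map $B$ is regular on a nonempty open square, hence $B(u,v)=c\,uv$, and evaluating at $v=1$ gives $c=-d(1)$, whence $\chi(uv)=u\,\chi(v)+v\,\chi(u)$.
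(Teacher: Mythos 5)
Your proof is correct, and it splits against the paper as follows. In cases (a), (b), (d) you are doing essentially what the paper does: both arguments use $a^{2x}=(a^{x})^{2}$, $\cos 2x=2\cos^{2}x-1$ and $\cosh 2x=2\cosh^{2}x-1$ to show that $u\mapsto d(u^{2})-2u\,d(u)$ is a regular combination of $\varphi(2x)$ and $\varphi(x)$ evaluated at $x=\log_{a}u$, $\arccos u$, $\mathrm{arcosh}\,u$, and then invoke Theorem~\ref{thm1} with the identity matrix and $n=2$; note that the paper itself applies Theorem~\ref{thm1} with regularity known only on $(0,+\infty)$, $(-1,1)$, $(1,+\infty)$, so your remark that regularity on a set of positive measure suffices is exactly consistent with its usage. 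Where you genuinely diverge is in the odd cases (c) and (e): the paper never uses triplication formulas. For $\sin$ it shifts the argument by $\tfrac{\pi}{2}$, using $\sin(x-\tfrac{\pi}{2})=-\cos x$ and $\cos(x-\tfrac{\pi}{2})=\sin x$ to show that $-\varphi(x-\tfrac{\pi}{2})+\sin(x)\,d(\tfrac{\pi}{2})$ is precisely the regular datum of case (b); for $\sinh$ it expands $\varphi(x+y)$ by the addition theorem, symmetrizes under $y\mapsto -y$, and specializes $x=\mathrm{arsinh}(1)$ to land on the case-(d) map $d(\cosh y)-\sinh(y)\,d(y)$. Your alternative with $k=3$ via $\sin 3x=3\sin x-4\sin^{3}x$ and $\sinh 3x=3\sinh x+4\sinh^{3}x$ is sound: I checked the crucial cancellation, e.g.\ in the $\sinh$ case the coefficient of $d(x)$ is $\tfrac{3}{4}\cosh 3x-\tfrac{3}{4}\cosh x-3\sinh^{2}x\cosh x$, which vanishes identically, leaving $d(u^{3})-3u^{2}d(u)=\tfrac{1}{4}\varphi(3x)-\bigl(\tfrac{3}{4}+3\sinh^{2}x\bigr)\varphi(x)$ at $u=\sinh x$, and $n=3$ is admissible in Theorem~\ref{thm1} since with $c=0$ the only restriction is $n\neq 1$. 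Your route buys uniformity (all five cases are one-step reductions of a power defect, with no chaining of case (c) to (b) and (e) to (d)), at the cost of needing the cubic instance of Theorem~\ref{thm1}, whereas the paper only ever uses the quadratic defect but pays with the longer symmetrization computation in case (e). Your closing polarization argument, $B(u,v)=\tfrac{1}{2}\bigl(\beta(u+v)-\beta(u)-\beta(v)\bigr)$ biadditive and regular on an open square, hence $B(u,v)=-d(1)uv$, is a correct self-contained finish for the $k=2$ cases which the paper does not contain, though, as you implicitly concede, it does not cover (c) and (e) as you have set them up. Two small points of hygiene: in case (a) assume, as the paper's proof explicitly does, that $a>0$ and $a\neq 1$ (for $a=1$ the hypothesis is vacuously satisfied by every additive $d$ and the conclusion fails), and restrict your substitutions $\arcsin$, $\arccos$, $\mathrm{arcosh}$ to the open intervals $(-1,1)$ and $(1,+\infty)$, exactly as the paper does.
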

\begin{proof}
 \begin{enumerate}[{Case} (a)]
  \item Let $a\in \mathbb{R}\setminus\left\{1\right\}$ be an arbitrary positive real number and suppose that the 
mapping $\varphi$ defined by 
\[
 \varphi(x)=d\left(a^{x}\right)-a^{x}\ln(a)d(x) 
\quad 
\left(x\in \mathbb{R}\right)
\]
is regular. 
A easy calculation shows that 
\[
 \varphi(2x)-2a^{x}\varphi(x)=d\left((a^{x})^{2}\right)-2a^{x}d\left(x\right)
\quad 
\left(x\in \mathbb{R}\right), 
\]
that is 
\[
 \varphi\left(2\log_{a}(u)\right)-2u\varphi\left(\log_{a}(u)\right)=
d(u^{2})-2ud(u)
\quad 
\left(u\in ]0, +\infty[\right). 
\]
Due to the regularity of the function $\varphi$, the mapping 
\[
 ]0, +\infty[\ni u\longmapsto \varphi\left(2\log_{a}(u)\right)-2u\varphi\left(\log_{a}(u)\right)
\]
is regular, too. Thus by Theorem \ref{thm1}, 
\[
 d(x)=\chi(x)+d(1)\cdot x 
\qquad 
\left(x\in \mathbb{R}\right), 
\]
where the function $\chi\colon \mathbb{R}\to \mathbb{R}$ is a derivation. 
\item Assume now that for the additive function $d\colon \mathbb{R}\to \mathbb{R}$, the mapping 
$\varphi$ defined on $\mathbb{R}$ by 
\[
 \varphi(x)=d\left(\cos(x)\right)+\sin(x)d(x) 
\qquad 
\left(x\in \mathbb{R}\right)
\]
is regular. 
If so, then
\[
 \dfrac{\varphi(2x)-4\cos(x)\varphi(x)+d(1)}{2}=
d\left(\cos^{2}(x)\right)-2\cos(x)f\left(x\right)
\]
holds for all $x\in \mathbb{R}$. 
Let now $u\in ]-1, 1[$ and write $\mathrm{arccos}(u)$ in place of $x$ to get 
\[
 \dfrac{\varphi(2\mathrm{arccos}(u))-4u\varphi(\mathrm{arccos}(u))+d(1)}{2}=
d(u^{2})-2ud(u). 
\]
Again, due to the regularity of the function $\varphi$, the mapping 
\[
 ]-1, 1[ \ni u\longmapsto \dfrac{\varphi(2\mathrm{arccos}(u))-4u\varphi(\mathrm{arccos}(u))+d(1)}{2}
\]
is regular, as well. Therefore, Theorem \ref{thm1} again implies that 
\[
 d(x)=\chi(x)+d(1)\cdot x 
\qquad 
\left(x\in \mathbb{R}\right), 
\]
is fulfilled with a certain real derivation $\chi\colon \mathbb{R}\to \mathbb{R}$. 
\item Suppose that for the additive function $d$, the mapping 
\[
 \varphi(x)=d\left(\sin(x)\right)-\cos(x)d(x)
\qquad 
\left(x\in \mathbb{R}\right)
\]
is regular. 
In this case 
\begin{multline*}
 \varphi\left(x-\frac{\pi}{2}\right)
=
d\left(\sin \left(x-\frac{\pi}{2}\right)\right)-\cos\left(x-\frac{\pi}{2}\right)d\left(x-\frac{\pi}{2}\right)
\\
-d\left(\cos(x)\right)-\sin(x)d(x)+\sin(x)d\left(\frac{\pi}{2}\right), 
\end{multline*}
that is, 
\[
 -\varphi\left(x-\frac{\pi}{2}\right)+\sin(x)d\left(\frac{\pi}{2}\right)=
d\left(\cos(x)\right)+\sin(x)d(x)
\qquad
\left(x\in \mathbb{R}\right). 
\]
 In view of Case (b) this yields that the function $d$ has the desired representation as stated. 
\item Assume the $d\colon \mathbb{R}\to \mathbb{R}$ is an additive function and the mapping 
\[
 \varphi(x)=d\left(\cosh(x)\right)-\sinh(x)d(x)
\qquad 
\left(x\in \mathbb{R}\right)
\]
is regular. The additivity of $d$ and some addition formula of the $\cosh$ function furnish 
\[
 \dfrac{\varphi(2x)-4\cosh(x)\varphi(x)+d(1)}{2}=
d\left(\cosh^{2}(x)\right)-2\cosh(x)d\left(\cosh(x)\right) 
\qquad 
\left(x\in \mathbb{R}\right). 
\]
Let now $u\in ]1, +\infty[$ arbitrary and put $x=\mathrm{arcosh}(u)$ into the previous identity to get 
\[
 \dfrac{\varphi(2\mathrm{arcosh}(u))-4u\varphi(\mathrm{arcosh}(u))+d(1)}{2}=
d(u^{2})-2ud(u). 
\]
Since the function $\varphi$ is regular, the mapping 
\[
 ]1, +\infty[\ni u\longmapsto \dfrac{\varphi(2\mathrm{arcosh}(u))-4u\varphi(\mathrm{arcosh}(u))+d(1)}{2}
\]
will also be regular. Therefore, Theorem \ref{thm1} implies again the desired decomposition of the function 
$d$. 
\item Finally, assume the $d\colon \mathbb{R}\to \mathbb{R}$ is an additive function so that 
\[
 \varphi(x)=d\left(\sinh(x)\right)-\cosh(x)d(x)
\qquad 
\left(x\in \mathbb{R}\right)
\]
is regular. 
Let $x, y\in \mathbb{R}$ be arbitrary, then 
\begin{multline*}
\varphi(x+y)= d\left(\sinh(x+y)\right)-\cosh(x+y)d(x+y)
\\=
d\left(\sinh(x)\cosh(y)\right)+d\left(\sinh(y)\cosh(x)\right)
\\-\left[\sinh(x)\sinh(y)+\cosh(x)\cosh(y)\right]d(x+y)
\\=
d\left(\sinh(x)\cosh(y)\right)+d\left(\sinh(y)\cosh(x)\right)
-\sinh(x)\sinh(y)d(x+y)
\\-\cosh(x)d(x)\cosh(y)
-\cosh(x)\cosh(y)d(y)
\end{multline*}
If we use the definition of the function $\varphi$, after some rearrangement, we arrive at 
\begin{multline*}
 \varphi(x+y)-\varphi(x)\cosh(y)-\varphi(y)\cosh(x)
\\
=
d\left(\sinh(x)\cosh(y)\right)+d\left(\sinh(y)\cosh(x)\right)
-\sinh(x)\sinh(y)d(x+y)\\
-\cosh(y)d\left(\sinh(x)\right)-\cosh(x)d\left(\sinh(y)\right)
\end{multline*}
for all $x, y\in \mathbb{R}$. If we replace here $y$ by $-y$, 
\begin{multline*}
 \varphi(x-y)-\varphi(x)\cosh(y)-\varphi(-y)\cosh(x)
\\
=
d\left(\sinh(x)\cosh(y)\right)-d\left(\sinh(y)\cosh(x)\right)
+\sinh(x)\sinh(y)d(x-y)\\
-\cosh(y)d\left(\sinh(x)\right)+\cosh(x)d\left(\sinh(y)\right)
\end{multline*}
can be concluded, where we have also used that the function $\cosh$ is even and the function 
$\sinh$ is odd. 
Adding this two identities side by side, 
\begin{multline*}
 \Phi(x, y)= 2d\left(\sinh(x)\cosh(y)\right)
\\+\sinh(x)\sinh(x)\left[d(x-y)-d(x+y)\right]
-2\cosh(y)d\left(\sinh(x)\right)
\end{multline*}
for any $x, y\in \mathbb{R}$, where 
\begin{multline*}
 \Phi(x, y)= 
\varphi(x+y)-\varphi(x)\cosh(y)-\varphi(y)\cosh(x)
\\+\varphi(x-y)-\varphi(x)\cosh(y)-\varphi(-y)\cosh(x)
\quad 
\left(x, y\in \mathbb{R}\right). 
\end{multline*}
If we put $x=\mathrm{arsinh}(1)$, we get that 
\[
 \dfrac{\Phi\left(\mathrm{arsinh}(1), y\right)+2\cosh(y)d(1)}{2}
=
d\left(\cosh(y)\right)-\sinh(y)d(y)
\quad 
\left(y\in \mathbb{R}\right). 
\]
Due to the regularity of the function $\varphi$, the mapping 
\[
 \mathbb{R}\ni y\longmapsto \dfrac{\Phi\left(\mathrm{arsinh}(1), y\right)+2\cosh(y)d(1)}{2}
\]
is regular, too. Hence, Case (d) yields the desired form of the function $d$. 
\end{enumerate}
\end{proof}

In what follows, we would like to extend the list of the functions appearing in the previous statement. 
Therefore we prove the following. 

\begin{lem}\label{lem3}
 Let $d\colon \mathbb{R}\to \mathbb{R}$ be an additive function, 
$I\subset \mathbb{R}$ be a nonvoid open interval and 
$\xi\colon I\to \mathbb{R}$ be a continuously differentiable function so that 
the derivative of the function $\xi^{-1}\colon \xi(I)\to \mathbb{R}$ is nowhere zero. 
The mapping 
\[
 I \ni x\longmapsto d(\xi(x))-\xi'(x)d(x)
\]
is regular if and only if the mapping 
\[
 \xi(I)\ni u\longmapsto d(\eta(u))-\eta'(u)d(u)
\]
is regular, where $\eta=\xi^{-1}$. 
\end{lem}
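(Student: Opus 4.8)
The plan is to exhibit a single pointwise identity linking the two mappings and then to argue that each of the three notions of regularity is preserved by the operations that occur in it. Write $\varphi(x)=d(\xi(x))-\xi'(x)d(x)$ for $x\in I$ and $\psi(u)=d(\eta(u))-\eta'(u)d(u)$ for $u\in\xi(I)$, where $\eta=\xi^{-1}$. First I would record a consequence of the hypotheses: since $\eta$ is differentiable with $\eta'$ nowhere zero and $\eta(\xi(x))=x$, differentiation gives $\eta'(\xi(x))\xi'(x)=1$, so $\xi'$ is nowhere zero as well. Hence $\xi$ is a $C^1$ diffeomorphism of $I$ onto $\xi(I)$, $\eta$ is continuously differentiable, and both $\xi',\eta'$ are continuous and nowhere vanishing.

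Next I would compute $\psi(\xi(x))$ for $x\in I$. Using $\eta(\xi(x))=x$ and $\eta'(\xi(x))=1/\xi'(x)$ one obtains
\[
\psi(\xi(x))=d(x)-\frac{1}{\xi'(x)}d(\xi(x))=-\frac{1}{\xi'(x)}\bigl[d(\xi(x))-\xi'(x)d(x)\bigr]=-\frac{1}{\xi'(x)}\varphi(x).
\]
Substituting $x=\eta(u)$ and, symmetrically, reading the identity the other way round yields the two key formulas
\[
\psi(u)=-\eta'(u)\,\varphi(\eta(u))\quad(u\in\xi(I)),\qquad \varphi(x)=-\xi'(x)\,\psi(\xi(x))\quad(x\in I).
\]
Each of these expresses one mapping as the product of a continuous, nowhere-vanishing factor ($\eta'$, respectively $-\xi'$) with the composition of the other mapping with a $C^1$ diffeomorphism. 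Thus the whole lemma reduces to the assertion that the operations ``compose with a $C^1$ diffeomorphism'' and ``multiply by a continuous nonvanishing function'' preserve regularity.

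It then remains to verify this in each of the three senses. Continuity and local boundedness are immediate: $\eta$ and $\xi$ are continuous and map compact sets onto compact sets, and $\eta',\xi'$ are continuous, so both compositions and products behave as required. The delicate point — the step I expect to be the main obstacle — is Lebesgue measurability, since composing a measurable function with an arbitrary map need not preserve measurability. Here I would use that $\xi$, being $C^1$ with nowhere-vanishing derivative, is locally bi-Lipschitz and hence satisfies Lusin's condition (N), so that $\xi$ carries Lebesgue measurable sets to Lebesgue measurable sets; since $(\varphi\circ\eta)^{-1}(B)=\xi(\varphi^{-1}(B))$ for every Borel set $B$, this guarantees that $\varphi\circ\eta$ is measurable whenever $\varphi$ is, and multiplication by the continuous function $\eta'$ keeps measurability. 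Applying the first identity then shows that $\psi$ is regular once $\varphi$ is; the second identity, with the symmetric roles of $\xi$ and $\eta$, gives the converse, completing the equivalence.
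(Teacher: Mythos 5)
Your proof is correct and takes essentially the same route as the paper: substituting $x=\xi^{-1}(u)$ to obtain the single identity $\psi(u)=-\eta'(u)\,\varphi(\eta(u))$ (equivalently $\varphi(x)=-\xi'(x)\,\psi(\xi(x))$) is exactly the paper's argument, with the converse direction following by symmetry. The only difference is that you spell out why composition with a $C^1$ diffeomorphism and multiplication by a continuous nonvanishing factor preserve each notion of regularity --- in particular the measurability step via Lusin's condition (N) --- which the paper asserts in one line without justification.
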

\begin{proof}
 Assume that for the additive function $d$, we have that the mapping 
\[
 \varphi(x)=d(\xi(x))-\xi'(x)d(x) 
\qquad 
\left(x\in I\right)
\]
is regular. 
Let now $u\in \xi(I)$ and put $\xi^{-1}(u)$ in place of $x$ to get 
\[
 -\left(\xi^{-1}\right)'(u)\varphi(\xi^{-1}(u))=
d\left(\xi^{-1}(u)\right)-\left(\xi^{-1}\right)'(u)d(u). 
\]
Due to the regularity of $\varphi$, the mapping appearing in the left hand side is also regular, as stated. 
\end{proof}

In view of Theorem \ref{thm2} and Lemma \ref{lem3}, we immediately obtain the following theorem.

\begin{cor}\label{cor4}
  Assume that for the additive function 
$d\colon \mathbb{R}\to \mathbb{R}$ the mapping $\varphi$ defined by 
\[
 \varphi(x)=d\left(\xi(x)\right)-\xi'(x)d(x)
\]
is regular. Then the function $d$ can be represented as 
\[
 d(x)=\chi(x)+d(1)\cdot x 
\quad 
\left(x\in \mathbb{R}\right), 
\]
where $\chi\colon \mathbb{R}\to \mathbb{R}$ is a derivation, 
in any of the following cases
\begin{multicols}{2}
\begin{enumerate}[(a)]
 \item \[\xi(x)=\ln(x)\]
\item \[\xi(x)=\mathrm{arccos}(x)\]
\item \[\xi(x)=\mathrm{arcsin}(x)\]
\item \[\xi(x)=\mathrm{arcosh}(x)\]
\item \[\xi(x)=\mathrm{arsinh}(x). \]
\end{enumerate}
 
\end{multicols}
\end{cor}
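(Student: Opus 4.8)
The plan is to recognise each of the functions in (a)--(e) as the inverse of one of the mappings already treated in Theorem~\ref{thm2}, and then to transport the regularity hypothesis across this inversion by means of Lemma~\ref{lem3}. Indeed, on a suitable nonvoid open interval $I$ each $\xi$ above is a continuously differentiable bijection of $I$ onto $\xi(I)$ whose inverse $\eta=\xi^{-1}$ is, in the respective cases, the exponential function $u\mapsto e^{u}$, and the functions $\cos$, $\sin$, $\cosh$, $\sinh$, that is, precisely the functions of Theorem~\ref{thm2}. Since $u\mapsto d(\eta(u))-\eta'(u)d(u)$ is exactly the quantity whose regularity forces the decomposition $d(x)=\chi(x)+d(1)\cdot x$ in the matching case of Theorem~\ref{thm2}, it will be enough to deduce its regularity from that of $x\mapsto d(\xi(x))-\xi'(x)d(x)$.

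First I would check the hypotheses of Lemma~\ref{lem3} in each case and record the relevant intervals. For (a) take $I=\,]0,+\infty[$, so that $\xi=\ln$ is continuously differentiable, $\xi(I)=\mathbb{R}$ and $\eta(u)=e^{u}$ has nowhere vanishing derivative. For (e) take $I=\mathbb{R}$, whence $\xi(I)=\mathbb{R}$ and $\eta=\sinh$ has derivative $\cosh$, which is everywhere positive. For (d) take $I=\,]1,+\infty[$, so $\xi(I)=\,]0,+\infty[$ and $\eta=\cosh$ has derivative $\sinh$, nonzero on $]0,+\infty[$. For (b) take $I=\,]-1,1[$, so $\xi(I)=\,]0,\pi[$ and $\eta=\cos$ has derivative $-\sin$, nonzero there. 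For (c) take $I=\,]-1,1[$, so $\xi(I)=\,]-\pi/2,\pi/2[$ and $\eta=\sin$ has derivative $\cos$, again nonzero. In each case Lemma~\ref{lem3} applies and produces the regularity of $u\mapsto d(\eta(u))-\eta'(u)d(u)$ on $\xi(I)$.

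In cases (a) and (e) we have $\xi(I)=\mathbb{R}$, so Lemma~\ref{lem3} delivers exactly the hypothesis of the corresponding case of Theorem~\ref{thm2}, and the desired representation of $d$ follows at once. Case (d) is almost as direct: the reduction carried out in case (d) of Theorem~\ref{thm2} evaluates the transported map only at $\mathrm{arcosh}(u)$ and at $2\,\mathrm{arcosh}(u)$ for $u\in\,]1,+\infty[$, and both of these arguments lie in $]0,+\infty[=\xi(I)$, so regularity on $\xi(I)$ is all that the argument uses. Alternatively one notes that $u\mapsto d(\cosh u)-\sinh(u)d(u)$ is even, since $\cosh$ is even while $\sinh$ and $d$ are odd, so its regularity on $]0,+\infty[$ already forces regularity on all of $\mathbb{R}$.

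The step I expect to be the main obstacle is the bookkeeping of domains in cases (b) and (c). Here $\xi(I)$ is a proper subinterval of $\mathbb{R}$, whereas the substitution used in the proof of Theorem~\ref{thm2} reaches slightly outside it; for instance in case (b) one needs the transported map at $2\,\mathrm{arccos}(u)\in\,]0,2\pi[$, while Lemma~\ref{lem3} only guarantees regularity on $]0,\pi[$. I would close this gap in either of two ways. One option is to restrict the substitution to the sub-interval on which every argument stays inside $\xi(I)$; this still yields the regularity of $u\mapsto d(u^{2})-2u\,d(u)$ on a nonvoid open interval, which suffices because the rigidity of additive functions underlying Theorem~\ref{thm1} is local, in that measurability on, or local boundedness or continuity over, a set of positive measure already triggers the conclusion. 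The other option is to extend the regularity across the whole of $]0,2\pi[$ by means of the functional relation satisfied by the transported cosine map $\psi(u)=d(\cos u)+\sin(u)d(u)$, namely
\[
\psi(u+\pi)=-\psi(u)-\sin(u)\,d(\pi),
\]
whose right-hand side is regular (with an analogous identity in the sine case). Either way, in every case we obtain the regularity of $u\mapsto d(u^{2})-2u\,d(u)$ and hence, by the local form of Theorem~\ref{thm1} applied with $f=d$ and $g=2d$, the decomposition $d(x)=\chi(x)+d(1)\cdot x$ with $\chi\colon\mathbb{R}\to\mathbb{R}$ a derivation, as claimed.
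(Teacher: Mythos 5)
Your proof is correct and takes essentially the same route as the paper, whose entire argument for this corollary is the one-line remark that it follows immediately from Theorem~\ref{thm2} and Lemma~\ref{lem3}, i.e.\ exactly your scheme of inverting $\xi$ to obtain $e^{u}$, $\cos$, $\sin$, $\cosh$, $\sinh$ and transporting regularity via Lemma~\ref{lem3}. The only difference is that you additionally notice and repair the domain mismatch in cases (b)--(d) --- Lemma~\ref{lem3} yields regularity only on $\xi(I)$, while Theorem~\ref{thm2} is formally stated with regularity on all of $\mathbb{R}$ --- a point the paper's ``immediately'' passes over silently; your fixes (shrinking the substitution interval so all arguments stay in $\xi(I)$, the correct identity $\psi(u+\pi)=-\psi(u)-\sin(u)\,d(\pi)$, and evenness of $u\mapsto d(\cosh u)-\sinh(u)d(u)$) are sound and consistent with how the paper itself invokes Theorem~\ref{thm1} with regularity known only on proper subintervals such as $]-1,1[$ or $]1,+\infty[$.
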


\section{Stability of derivations}

As a starting point of the proof of the main result of this section
the theorem of Hyers will be used. Originally this
statement was formulated in terms of functions that
are acting between Banach spaces, see Hyers \cite{Hye41}.
However, we will use this theorem only in the particular case
when the domain and
the range are the set of reals.
In this setting we have the following.

\begin{thm}
Let $\varepsilon\geq 0$ and suppose that the function
$f:\mathbb{R}\rightarrow\mathbb{R}$ fulfills the inequality
\[
\left|f(x+y)-f(x)-f(y)\right|\leq \varepsilon
\]
for all $x,y\in\mathbb{R}$. Then there exists an additive
function $a:\mathbb{R}\rightarrow\mathbb{R}$ such that
\[
\left|f(x)-a(x)\right|\leq \varepsilon
\]
holds for arbitrary $x\in\mathbb{R}$.
\end{thm}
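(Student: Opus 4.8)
The plan is to prove this by Hyers' \emph{direct method}, obtaining the additive approximant $a$ as a pointwise limit of dyadically rescaled copies of $f$. First I would specialize the hypothesis by setting $y=x$, which gives
\[
\left|f(2x)-2f(x)\right|\leq \varepsilon
\qquad \left(x\in\mathbb{R}\right),
\]
and hence, after dividing by $2$, the one-step estimate $\left|\tfrac{1}{2}f(2x)-f(x)\right|\leq \tfrac{1}{2}\varepsilon$. This suggests defining, for each nonnegative integer $n$, the function
\[
a_{n}(x)=\frac{f\left(2^{n}x\right)}{2^{n}}
\qquad \left(x\in\mathbb{R}\right),
\]
so that $a_{0}=f$, and the whole argument reduces to showing that $\left(a_{n}(x)\right)$ converges for each fixed $x$.

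Second, I would establish that $\left(a_{n}(x)\right)$ is a Cauchy sequence. Replacing $x$ by $2^{n}x$ in the one-step estimate and dividing by $2^{n}$ yields $\left|a_{n+1}(x)-a_{n}(x)\right|\leq 2^{-(n+1)}\varepsilon$, so for $m>n\geq 0$ a telescoping sum together with the geometric series gives
\[
\left|a_{m}(x)-a_{n}(x)\right|\leq \sum_{k=n}^{m-1}\frac{\varepsilon}{2^{k+1}}\leq \frac{\varepsilon}{2^{n}}.
\]
By completeness of $\mathbb{R}$ the limit $a(x)=\lim_{n\to\infty}a_{n}(x)$ exists for every $x\in\mathbb{R}$. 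Taking $n=0$ in the last display and letting $m\to\infty$ furnishes exactly the asserted bound $\left|f(x)-a(x)\right|\leq \varepsilon$.

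Third, I would verify that $a$ is additive. Writing the defining inequality with $2^{n}x$ and $2^{n}y$ in place of $x$ and $y$ and dividing by $2^{n}$ gives
\[
\left|a_{n}(x+y)-a_{n}(x)-a_{n}(y)\right|\leq \frac{\varepsilon}{2^{n}}
\qquad \left(x,y\in\mathbb{R}\right),
\]
and passing to the limit $n\to\infty$ forces $a(x+y)=a(x)+a(y)$, so $a$ is additive, as required.

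The individual computations here are entirely routine estimations; the only genuine obstacle is the initial idea that the correct additive approximant is produced by iterating the doubling relation and normalizing by $2^{n}$. Once the sequence $a_{n}(x)=2^{-n}f\left(2^{n}x\right)$ has been written down, completeness of $\mathbb{R}$ and a single geometric estimate settle both the existence of $a$ and the uniform bound, while the stability inequality is inherited in the limit.
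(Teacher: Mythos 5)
Your proof is correct and is precisely the classical direct method of Hyers: the paper states this theorem without proof, citing Hyers \cite{Hye41}, whose original argument is exactly your construction $a(x)=\lim_{n\to\infty}2^{-n}f\left(2^{n}x\right)$ with the telescoping geometric estimate. All your steps check out (the one-step bound $\left|a_{n+1}(x)-a_{n}(x)\right|\leq 2^{-(n+1)}\varepsilon$, the Cauchy estimate, the bound $\left|f(x)-a(x)\right|\leq\varepsilon$ at $n=0$, and additivity in the limit), so there is nothing to correct.
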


In other words, Hyers' theorem states that if a function $f\colon \mathbb{R}\to \mathbb{R}$ fulfills 
the inequality appearing above, then it can be represented as 
\[
 f(x)=a(x)+b(x) 
\qquad 
\left(x\in \mathbb{R}\right), 
\]
where $a\colon \mathbb{R}\to \mathbb{R}$ is an additive and 
$b\colon \mathbb{R}\to \mathbb{R}$ is a bounded function. Moreover, for all 
$x \in \mathbb{R}$, we also have $\left|b(x)\right|\leq \varepsilon$. 

With the aid of Hyers' theorem and the results of the previous section, the following 
stability type result can be proved. Concerning stability properties of derivations 
the interested reader may consult Badora \cite{Bad06} and Boros--Gselmann \cite{BorGse10}. 

\begin{thm}
 Let $\varepsilon>0$ be arbitrarily fixed, 
$f\colon \mathbb{R}\to \mathbb{R}$ be a function and suppose that 
\begin{enumerate}[(A)]
 \item for all $x, y\in \mathbb{R}$ we have 
\[
 \left|f(x+y)-f(x)-f(y)\right|\leq \varepsilon. 
\]
\item the mapping 
\[
 x\longmapsto f\left(\xi(x)\right)-\xi'(x)f(x)
\]
is locally bounded on its domain, where the function $\xi$ is one of the functions 
\begin{multicols}{2}
 \begin{enumerate}[(a)]
 \item \[a^{x}\]
\item \[\cos(x)\]
\item \[\sin(x)\]
\item \[\cosh(x)\]
\item \[\sinh(x)\]
\item \[\ln(x)\]
\item \[\mathrm{arccos}(x)\]
\item \[\mathrm{arcsin}(x)\]
\item \[\mathrm{arcosh}(x)\]
\item \[\mathrm{arsinh}(x). \]
 \end{enumerate}
\end{multicols}

\end{enumerate}
Then there exist $\lambda\in \mathbb{R}$ and a real derivation $\chi\colon \mathbb{R}\to \mathbb{R}$ 
such that 
\[
 \left|f(x)-\left[\chi(x)+\lambda\cdot x\right]\right|\leq \varepsilon
\]
holds for all $x\in \mathbb{R}$. 
\end{thm}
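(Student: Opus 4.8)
The plan is to combine Hyers' theorem with Theorem~\ref{thm2} and Corollary~\ref{cor4}. By hypothesis (A) and Hyers' theorem, the function $f$ admits a decomposition $f=a+b$, where $a\colon\mathbb{R}\to\mathbb{R}$ is additive and $b\colon\mathbb{R}\to\mathbb{R}$ satisfies $\left|b(x)\right|\le\varepsilon$ for every $x\in\mathbb{R}$. The entire strategy rests on transferring the local boundedness hypothesis (B) from $f$ to its additive part $a$, so that the characterization already established can be applied to $a$.

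To do this I would substitute the decomposition into the mapping appearing in (B) and split it additively as
\[
 f\left(\xi(x)\right)-\xi'(x)f(x)=\bigl[a\left(\xi(x)\right)-\xi'(x)a(x)\bigr]+\bigl[b\left(\xi(x)\right)-\xi'(x)b(x)\bigr].
\]
The second bracket is \emph{locally bounded}: the term $b\left(\xi(x)\right)$ is globally bounded by $\varepsilon$ since $b$ is, while $\left|\xi'(x)b(x)\right|\le\varepsilon\,\left|\xi'(x)\right|$, and $\xi'$ is continuous on the (open) natural domain of $\xi$ in each of the cases (a)--(j), hence locally bounded there. Since the left-hand side is locally bounded by hypothesis (B), the first bracket $x\longmapsto a\left(\xi(x)\right)-\xi'(x)a(x)$ is a difference of two locally bounded mappings and therefore locally bounded, i.e. regular.

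At this point the additive function $a$ fulfils exactly the hypothesis of Theorem~\ref{thm2} in the cases (a)--(e) and of Corollary~\ref{cor4} in the cases (f)--(j). Consequently $a$ decomposes as $a(x)=\chi(x)+a(1)\cdot x$ with a real derivation $\chi$. Putting $\lambda=a(1)$ we obtain $f(x)-\left[\chi(x)+\lambda\cdot x\right]=b(x)$, whence $\left|f(x)-\left[\chi(x)+\lambda\cdot x\right]\right|=\left|b(x)\right|\le\varepsilon$ for all $x\in\mathbb{R}$, as required.

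The only genuine obstacle is the local boundedness of the $b$-part, which is precisely why hypothesis (B) is phrased with local boundedness rather than, say, measurability. The product $\xi'(x)b(x)$ need not be measurable or continuous even though $b$ is bounded, but it is automatically locally bounded as soon as $\xi'$ is, and the latter holds for every listed elementary function on its natural open domain. Everything else is a routine application of the results of the preceding sections.
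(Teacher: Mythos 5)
Your proposal is correct and follows exactly the paper's argument: decompose $f=a+b$ via Hyers' theorem, transfer the local boundedness in (B) to the additive part $a$ (the paper leaves this step implicit, while you correctly justify it through the boundedness of $b$ and the local boundedness of $\xi'$), and then apply Theorem~\ref{thm2} and Corollary~\ref{cor4} to conclude with $\lambda=a(1)$. Your closing observation about why (B) is stated with local boundedness rather than measurability is a sound explanation of the paper's choice of hypothesis.
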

\begin{proof}
 Due to assumption (A), we immediately have that 
\[
 f(x)=a(x)+b(x)
\qquad 
\left(x\in \mathbb{R}\right), 
\]
where $a\colon \mathbb{R}\to \mathbb{R}$ is an additive and 
$b\colon \mathbb{R}\to \mathbb{R}$ is a bounded function.
If we use supposition (B), from this we get that the mapping
\[
 x\longmapsto \left[a(\xi(x))-\xi'(x)a(x)\right] +\left[b(\xi(x))-\xi'(x)b(x)\right]
\]
is locally bounded. From this however the local boundedness of the function 
\[
 x\longmapsto a(\xi(x))-\xi'(x)a(x)
\]
can be deduced. In view of the previous statements (see Theorem \ref{thm2} and Corollary \ref{cor4}), 
\[
 a(x)=\chi(x)+a(1)\cdot x 
\qquad 
\left(x\in \mathbb{R}\right)
\]
is fulfilled for any $x\in \mathbb{R}$, where $\chi\colon \mathbb{R}\to \mathbb{R}$ is a certain real derivation. 
For the function $f$ this means that there exists $\lambda \in \mathbb{R}$ and a real derivation 
$\chi\colon \mathbb{R}\to \mathbb{R}$ so that 
\[
f(x)=\chi(x)+\lambda\cdot x+b(x)
\qquad 
\left(x\in \mathbb{R}\right), 
\]
or equivalently 
\[
 \left|f(x)-\left[\chi(x)+\lambda\cdot x\right]\right|\leq \varepsilon
\]
is staisfied for any $x\in \mathbb{R}$. 
\end{proof}

\subsubsection*{Acknowledgements}
This note is dedicated to two important men in my life, to Fr\'{e}di and to P\'{e}ter.


\end{document}